\theoremstyle{plain}
\newtheorem{thm}{Theorem}
\newtheorem{lem}[thm]{Lemma}
\newtheorem{prop}[thm]{Proposition}
\theoremstyle{definition}
\newtheorem{defn}[thm]{Definition}
\newtheorem{rem}[thm]{Remark}
\DeclareMathOperator{\id}{id}
\DeclareMathOperator{\supp}{Supp}
\newcommand{\SkewRing}{A \star_\alpha G}
\begin{document}

\title[Simple Semigroup Graded Rings]{Simple Semigroup Graded Rings}

%\date{}

\author{Patrik Nystedt}
\address{University West, Department of Engineering Science, SE-46186
  Trollh\"{a}ttan, Sweden}
\email{Patrik.Nystedt@hv.se}

\author{Johan \"{O}inert}
\address{Centre for Mathematical Sciences, P.O. Box 118, Lund University, SE-22100 Lund, Sweden}
\email{Johan.Oinert@math.lth.se}

\subjclass[2010]{16W50, 16D25, 16U70, 16S35}
\keywords{semigroup graded ring, partial skew group ring, simplicity.}

\begin{abstract}
We show that if $R$ is a, not necessarily unital, 
ring graded by a semigroup $G$
equipped with an idempotent $e$ such that
$G$ is cancellative at $e$,
the non-zero elements of
$eGe$ form a hypercentral group and
$R_e$ has a non-zero idempotent $f$,
then $R$ is simple if and only if 
it is graded simple and 
the center of the corner subring $f R_{eGe} f$ is a field.
This is a generalization of a result of E. Jespers' on the 
simplicity of a unital ring graded by a hypercentral group.
We apply our result to partial skew group rings
and obtain necessary and sufficient conditions for the
simplicity of a, not necessarily unital, partial skew group ring 
by a hypercentral group. 
Thereby, we generalize a very recent result of D. Gon\c{c}alves'.
We also point out how E. Jespers' result immediately implies
a generalization of a simplicity result,
recently obtained by A. Baraviera, W. Cortes and M. Soares, 
for crossed products by twisted partial actions.
\end{abstract}

\maketitle

\pagestyle{headings}

%%%%%%%%%%%%%%%%%%%%%%%
\section{Introduction}%
%%%%%%%%%%%%%%%%%%%%%%%

Suppose that $R$ is an associative,
not necessarily unital, ring
and $G$ is a semigroup, i.e. a non-empty
set equipped with an associative binary operation 
$G \times G \ni (g,h) \mapsto gh \in G$.
Recall that $R$ is called 
\emph{$G$-graded} if there for each $g \in G$
is an additive subgroup $R_g$ of $R$
such that $R = \oplus_{g \in G} R_g$
and the inclusion $R_g R_h \subseteq R_{gh}$ holds
for all $g,h \in G$.

The investigation of semigroup graded rings has been carried out by
many authors, see e.g.
\cite{Abrams96,Bell96,Chanyshev90,Chick87,Clase96,Dascalescu01,Gardner75,Ignatov82,
Karpilovsky92,Krempa85,Munn88,Ponizovskii87,Puczylowski91,Wauters87}.
For an excellent overview of the theory of semigroup graded rings,
we refer the reader to A. V. Kelarev's extensive book \cite{Kelarev02}, and the references therein.

Since many ring constructions are special cases of semigroup graded rings,
e.g. monomial rings, crossed products, skew polynomial rings,
twisted semigroup rings, skew power series rings,
edge and path algebras, generalized matrix rings,
incidence algebras and category graded rings,
the theory of semigroup graded rings can be
applied to the study of other less general constructions, giving new results
for several constructions simultaneously, and unifying theorems obtained
earlier. 

An important problem in the investigation of semigroup graded rings is
to explore how properties of the whole ring $R$ are connected
to properties of subrings $R_H = \oplus_{g \in H} R_g$ where $H$ runs over subsemigroups
of $G$. Many results of this sort are known for finiteness conditions,
nil and radical properties, semisimplicity, semiprimeness
and semiprimitivity
(see the references in \cite{Kelarev95} and \cite{Kelarev98}). 

The aim of this article is to establish
a similar result (see Theorem \ref{maintheorem}) for simple semigroup graded rings.
Recall that an (two-sided) ideal $I$ of a $G$-graded ring $R$
is said to be a \emph{graded ideal} (or \emph{$G$-graded ideal})
if $I = \oplus_{g \in G} (I \cap R_g)$ holds.
A $G$-graded ring $R$ is called \emph{graded simple} (or \emph{$G$-graded simple})
if $R$ and $\{0\}$ are its only graded ideals.
Clearly, graded simplicity is
a necessary condition for simplicity.
In the case when $G$ is a \emph{hypercentral} group, 
that is when every non-trivial factor group of $G$
has non-trivial center, E. Jespers \cite{jespers1993} 
has determined precisely when unital $G$-graded rings are simple.

\begin{thm}[E. Jespers \cite{jespers1993}]\label{jespers}
If $G$ is a hypercentral group and
$R$ is a $G$-graded unital ring,
then $R$ is simple if and only if
$R$ is graded simple and the center
of $R$ is a field.
\end{thm}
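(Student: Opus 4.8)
The plan is to prove the two implications separately. The forward one is routine: if $R$ is simple then, since every graded ideal is in particular an ideal, $R$ is automatically graded simple; and $Z(R)$ is a commutative unital ring in which any $0 \neq z$ spans the nonzero ideal $Rz = RzR$, so $Rz = R$, whence $zu = 1$ for some $u$, and centrality of $z$ gives $uz = 1$ and (a short check) $u \in Z(R)$ --- so $Z(R)$ is a field. The content lies in the reverse implication, which I would attack by a minimal-support argument.

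So assume $R$ is graded simple with $Z(R)$ a field and let $I$ be a nonzero ideal; I show $I = R$. First I record a reduction: it is enough to produce a nonzero homogeneous $a \in I \cap R_g$, because then the ideal $RaR$ it generates is graded --- its degree-$d$ part being $\sum_{pgq = d} R_p a R_q$, and it contains $a = 1 \cdot a \cdot 1$ since $1 \in R_e$ --- so graded simplicity forces $RaR = R$, hence $I = R$. To find such an $a$, choose $x \in I \setminus \{0\}$ with $S := \supp(x)$ of minimal cardinality $n$; if $n = 1$ we are done, so suppose $n \geq 2$ and seek a contradiction. Next I normalize so that $e \in S$: for any $g \in S$ graded simplicity gives $R x_g R = R$, and extracting the degree-$e$ part of an expression $1 = \sum_i r_i x_g s_i$ produces homogeneous $b \in R_p$ and $c \in R_{g^{-1} p^{-1}}$ with $b x_g c \neq 0$; then $b x c \in I \setminus \{0\}$ has $e$ in its support and support inside $p S g^{-1} p^{-1}$, so minimality gives $|\supp(bxc)| = n$ and I replace $x$ by $bxc$. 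Now the dichotomy. If $x$ commutes with every homogeneous element of $R$, then $x \in Z(R)$, hence $x$ is a unit (as $Z(R)$ is a field), so $I = R$ --- contradicting $n \geq 2$. Otherwise choose homogeneous $r \in R_h$ with $y := rx - xr \neq 0$; then $y \in I$, so $|\supp(y)| \geq n$, while $\supp(y) \subseteq hS \cup Sh$ with degree-$h$ component $r x_e - x_e r$. The point is that whenever $h$ can be arranged so that $hS = Sh$ as subsets of $G$ --- automatic when $G$ is abelian --- one gets $|\supp(y)| \leq n$, hence $\supp(y) = hS$ and $r x_g \neq x_g r$ for every $g \in S$; iterating translations and commutators of this kind, while tracking the finitely many group elements that appear, should force $n$ down to $1$.

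The main obstacle is precisely this last step for non-abelian $G$, where $hS$ and $Sh$ need not agree and the commute-down move can enlarge the support. Hypercentrality is what saves it, and I would exploit it by transfinite induction along the upper central series $1 = Z_0 \leq Z_1 \leq \cdots \leq Z_\mu = G$: the goal is to show that, after suitable translations and commutators, the support of $x$ can be squeezed into a single coset of some $Z_\alpha$, and then --- working down inside $Z_\alpha$ with genuinely central group elements $h$, for which $hS = Sh$ --- to shrink it to a single point; the obstruction to each shrinking step is a commutator, which once the support sits inside the center of $G$ is killed by the field hypothesis on $Z(R)$. At limit stages one uses that $x$ has finite support, so only finitely many group elements are ever involved, and the subgroups they generate are finitely generated and hence nilpotent, since hypercentral groups are locally nilpotent; the abelian case serves as the clean base. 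Carrying out this descent carefully, so that it genuinely terminates in a nonzero homogeneous element of $I$ --- equivalently, a nonzero central element of $R$ lying in $I$ --- is the technical heart of the proof.
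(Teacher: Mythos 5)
Your forward implication is fine, and your minimal-support setup (normalize so that $e \in \supp(x)$, use the commutator $rx-xr$ to shrink the support, invoke the field hypothesis on $Z(R)$ once the element is central) is indeed the engine of the real proof. The problem is that everything after the sentence ``hypercentrality is what saves it'' is a plan rather than an argument, and the plan as stated has a genuine hole. You never give a mechanism for ``squeezing the support of $x$ into a single coset of some $Z_\alpha$'', and your limit-stage idea---pass to the finitely generated (hence nilpotent) subgroup generated by the group elements that appear---does not work, because graded simplicity of $R$ over $G$ is a global hypothesis: it does not restrict to the subring $R_H=\oplus_{g\in H}R_g$ for a subgroup $H\leq G$, so you cannot run the abelian/nilpotent case inside the subgroup generated by $\supp(x)$. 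Likewise, once $hS\neq Sh$ the commute-down move can enlarge the support, and ``iterating translations and commutators \ldots should force $n$ down to $1$'' is precisely the step that fails for non-abelian $G$; nothing in your proposal replaces it.

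What actually closes the gap (and is how the paper's machinery, Lemma \ref{unit}, Lemma \ref{foreachi} and Proposition \ref{secondprop}, proves the hard direction) is a descent on \emph{gradings} rather than on the support inside $G$: regrade $R$ by the quotient groups $G/Z_i(G)$. The key intermediate statement is: if $R$ is graded simple and $Z(R)$ is a field, then $R$ is $G/Z(G)$-graded simple. This is proved by your minimal-support argument, but carried out inside a nonzero $G/Z(G)$-graded ideal $I$ after two normalizations: first replace $x$ by its component in $R_{Z(G)}$, the identity component of the coarse grading (this stays in $I$ because $I$ is graded for the coarse grading, and is nonzero after arranging $x_e\neq 0$), and then normalize $x_e=1$ using the ideal $\{s_e \mid s\in RxR,\ \supp(s)\subseteq\supp(x)\}$ of $R_e$ together with graded simplicity. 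Now $\supp(x)\subseteq Z(G)$, so $h\,\supp(x)=\supp(x)\,h$ for \emph{every} $h$, the commutator trick strictly drops the support, minimality forces $x\in Z(R)$, and the field hypothesis turns $x$ into a unit lying in $I$, giving $I=R$. Transfinite induction along the upper central series then yields $G/Z_i(G)$-graded simplicity for every $i$, and finally one uses finiteness of supports exactly once: any nonzero ideal $J$ contains a nonzero $a$ with $\supp(a)\subseteq Z_i(G)$ for some $i$, so the ideal generated by $a$ is $G/Z_i(G)$-graded and hence equals $R$. This replacement of your open-ended support descent by an induction over the coarser gradings is the missing technical heart; without it your proposal does not yet constitute a proof.
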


Suppose that $R$ is a ring graded by a
semigroup equipped with a \emph{zero element},
i.e. an element $\theta \in G$ satisfying
$\theta g = g \theta = \theta$, for $g \in G$.
This implies that whenever $R_{\theta}$ is non-zero
and there is a non-zero $g \in G$ such that $R_g$
is non-zero, $R_{\theta}$ is a non-trivial
ideal of $R$ and, hence, $R$ is not simple.
We therefore, throughout this article, make the assumption
that {\it if $G$
has a zero element $\theta$, then $R_{\theta} = \{ 0 \}$.}
To state our generalization of Theorem \ref{jespers},
we need to introduce the following semigroup notion.
If $G$ is a semigroup and $e$ is a non-zero idempotent in $G$,
then we say that $G$ is {\it cancellative at} $e$
if for any $a,b,x,y \in G$, a relation of the form
$ea x be = ea y be \neq \theta$, implies that $x = y$.
Clearly, all groups are cancellative at identity elements.
Also, all semigroups induced by groupoids are
cancellative at identity elements (see Remark \ref{remarkcategory}).

\begin{thm}\label{maintheorem}
If $R$ is a ring graded by a semigroup $G$
equipped with a non-zero idempotent $e$ such that
$G$ is cancellative at $e$,
the non-zero elements of $eGe$ form a hypercentral group and
$R_e$ has a non-zero idempotent $f$,
then $R$ is simple if and only if 
it is graded simple and 
the center of the corner subring $f R_{eGe} f$ is a field.
\end{thm}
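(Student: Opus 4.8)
The plan is to reduce the statement to Jespers' Theorem~\ref{jespers} applied to the corner ring $fRf$. The first step is to check that $fRf = fR_{eGe}f$: if $r \in R_g$, then $frf \in R_e R_g R_e \subseteq R_{ege}$ with $ege \in eGe$, so $fRf \subseteq R_{eGe}$, and $fR_{eGe}f \subseteq fRf$ trivially. Let $H$ be the hypercentral group of non-zero elements of $eGe$; since $R_\theta = \{0\}$ whenever $G$ has a zero element, $R_{eGe} = \bigoplus_{h \in H} R_h$, so $fRf$ is $H$-graded, and it is moreover a \emph{unital} ring with identity $f$ (indeed $f(frf) = (frf)f = frf$, and $f = fff \in fRf$). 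Thus Theorem~\ref{jespers} applies: $fRf$ is simple if and only if it is $H$-graded simple and $Z(fRf) = Z(fR_{eGe}f)$ is a field. The argument then rests on two facts: (i) for every ideal $S$ of $fRf$, $f(RSR)f = S$ --- a short computation using that $f$ is an identity for $fRf \supseteq S$, so that $fS = Sf = S$; and (ii) the crucial statement that \emph{if $R$ is graded simple and $I$ is a non-zero ideal of $R$, then $fIf \neq \{0\}$}.

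Granting (i) and (ii), the theorem follows. If $R$ is simple, then it is graded simple, and for a non-zero ideal $S$ of $fRf$ the ideal $RSR$ of $R$ is non-zero (it contains $fSf = S$), hence $RSR = R$, so $S = f(RSR)f = fRf$ by (i); thus the unital ring $fRf$ is simple, whence $Z(fRf) = Z(fR_{eGe}f)$ is a field (for $0 \neq z \in Z(fRf)$, the ideal $z\,fRf$ is non-zero, so $z$ is invertible with $z^{-1}$ central). Conversely, assume $R$ is graded simple with $Z(fR_{eGe}f)$ a field. Applying the argument of (i) to \emph{graded} ideals shows $fRf$ is $H$-graded simple: a non-zero $H$-graded ideal $S$ of $fRf$ gives the non-zero $G$-graded ideal $RSR$ of $R$ (spanned by the homogeneous elements $r_a s_h t_c \in R_{ahc}$), so $RSR = R$ and $S = fRf$. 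By Theorem~\ref{jespers}, $fRf$ is then simple. Now if $I$ is a non-zero ideal of $R$, then by (ii) $fIf$ is a non-zero ideal of $fRf$, hence $fIf = fRf$ and $f = fff \in I$; therefore $RfR \subseteq I$, and since $f \in R_e\setminus\{0\}$ is homogeneous, $RfR$ is a non-zero graded ideal of $R$, so $RfR = R$ and $I = R$. As $R \neq \{0\}$, this shows $R$ is simple.

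It remains to establish (ii), where the hypotheses ``$G$ cancellative at $e$'' and ``$R_\theta = \{0\}$'' are used, and which I expect to be the main obstacle. I would first record a fact valid in any non-zero graded simple ring: every non-zero homogeneous $y \in R_{g_0}$ generates $R$ as a two-sided ideal. Indeed the ideal generated by $y$ is spanned by homogeneous elements, hence is a non-zero graded ideal, hence is $R$; and $RyR = \{0\}$ is impossible, for then $R = Ry + yR + \Z y$ and, absorbing a factor $y$ into $RyR$, one gets $R^2 \subseteq yRy + \Z y^2$ and then $R^3 = \{0\}$, contradicting $R = R^2 = R^3 \neq \{0\}$ (here $R^2 = R$ since $R^2$ is a non-zero graded ideal, as $f = f^2 \in R^2$). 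Now take $0 \neq x = \sum_g x_g \in I$ and fix $g_0$ with $x_{g_0} \neq 0$. By the above, $f \in Rx_{g_0}R$; writing $f = fff = \sum_j (fr_j)\,x_{g_0}\,(s_jf)$, decomposing the $fr_j \in fR$ and $s_jf \in Rf$ into homogeneous components, and extracting the degree-$e$ part, one obtains homogeneous $u,v \in R$ with $fu = u$, $vf = v$, $(\deg u)\,g_0\,(\deg v) = e$ and $ux_{g_0}v \neq 0$; note that $e\,(\deg u) = \deg u$ and $(\deg v)\,e = \deg v$, since $u$ is a component of $fr_j$ and $v$ a component of $s_jf$. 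Put $b = \deg u$ and $c = \deg v$. Now consider $uxv = \sum_g ux_gv \in I$: every term with $bgc = \theta$ vanishes because $R_\theta = \{0\}$, while for the remaining terms the degrees $bgc = e\,b\,g\,c\,e$ are pairwise distinct, since $G$ is cancellative at $e$. As $ux_{g_0}v \neq 0$ lies in $R_{bg_0c}$ (so $bg_0c \neq \theta$), it is an undeleted, uncancelled homogeneous component of $uxv$, whence $uxv \neq 0$. Finally $uxv = (fu)x(vf) = f(uxv)f \in fIf$, proving (ii) and with it the theorem.
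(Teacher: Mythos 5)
Your proof is correct, and its engine is the same as the paper's: everything is reduced to the corner $S=fR_{eGe}f=fR_Hf$, and cancellativity at $e$ together with $R_\theta=\{0\}$ is used to show that a non-zero ideal of $R$ meets the corner non-trivially (your fact (ii) is precisely the step ``$x'=fy_jxz_jf\neq 0$'' inside the paper's Proposition \ref{secondprop}), after which graded simplicity gives $f\in I$ and $I\supseteq RfR=R$. Where you genuinely differ is the surrounding organization. The paper does not invoke Theorem \ref{jespers} as a black box: it re-proves the needed direction for the corner by transfinite induction along the ascending central series (Lemma \ref{unit}, a re-proof of Jespers' Proposition 4, plus Lemma \ref{foreachi}), and it obtains the ``only if'' direction directly from simplicity of $R$ (Proposition \ref{firstprop}), whereas you first show $fRf$ is simple via your transfer fact (i) and then quote the standard fact that the center of a simple unital ring is a field. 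You also handle the lack of a unit differently when producing $f$ from the homogeneous component $x_{g_0}$: the paper uses the sandwich $f=f^3\in fJf\subseteq fRx_{g_0}Rf$ with $J=\Z x_{g_0}+Rx_{g_0}+x_{g_0}R+Rx_{g_0}R$, while you prove $Rx_{g_0}R\neq\{0\}$ by the nilpotency argument ($R^3=\{0\}$ otherwise); at that point you should state explicitly that $Rx_{g_0}R$ is a non-zero \emph{graded} ideal, hence equals $R$ by graded simplicity --- this is what actually yields $f\in Rx_{g_0}R$, and it is the only (easily repaired) elision in your write-up; the paper's $f=f^3$ trick is shorter. In exchange, your route exhibits Theorem \ref{maintheorem} as literally a corollary of Theorem \ref{jespers} applied to a unital corner, while the paper's route is self-contained and incidentally gives an alternative proof of Jespers' key lemma.
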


Here we would like to make two remarks.
First of all, note that the above result might at first glance seem more general than it 
actually is. In fact, graded simplicity of $R$
often forces the semigroup $G$ to be 
an inverse semigroup (see Remark \ref{inversesemigroup}).
Secondly, up until recently, the authors of the present article (and presumably also 
the authors of e.g. \cite{BCS}, \cite{crow2005}) were unaware 
of the existence of E. Jespers' simplicity results in \cite{jespers1989} 
and \cite{jespers1993}, as well as A. D. Bell's in \cite{Bell}.
The technique used to prove the simplicity criterion 
for skew group rings in \cite{oinertarxiv11} is slightly 
different from the one used in \cite{jespers1989} and \cite{jespers1993},
but can in fact, after minor adjustments, be used to give a more 
direct proof of the main result of \cite{jespers1989}.

This article is organized as follows.

In Section \ref{mainsection}, we
recall the relevant definitions
concerning semigroup graded rings and prove Theorem \ref{maintheorem}.

In Section \ref{applicationpartialskewgroupring},
we apply Theorem \ref{maintheorem} to partial skew
group rings. 
Partial group actions were introduced by 
R. Exel in the context of crossed product C*-algebras \cite{Exel94}.
A decade later, the investigation of its algebraic counterpart (the partial skew group rings) began by \cite{DokuchaevExel05}.
We obtain necessary and sufficient conditions for the
simplicity of, not necessarily unital, partial skew group rings
by hypercentral groups. 
Thereby, we generalize a very recent 
result of D. Gon\c{c}alves' \cite{Goncalves13}.
At the end of the section,
we also point out how E. Jespers' result immediately implies
a generalization of a simplicity result,
recently obtained by A. Baraviera, W. Cortes and M. Soares, 
for crossed products by twisted partial actions.

%%%%%%%%%%%%%%%%%%%%%%%%%%%%%%%%%%%%%%%%%%%%%%%%%%%%
\section{Semigroup Graded Rings}\label{mainsection}%
%%%%%%%%%%%%%%%%%%%%%%%%%%%%%%%%%%%%%%%%%%%%%%%%%%%%

At the end of this section, we show Theorem \ref{maintheorem}.
To this end, we show a series of results concerning 
ideals in semigroup graded rings
(see Lemma \ref{unit}, Lemma \ref{foreachi}, 
Proposition \ref{firstprop} and Proposition \ref{secondprop}).

We begin by fixing the notation.
Throughout this section, $R$ denotes
a ring graded by a semigroup $G$.
Take $r \in R$.
There are unique
$r_g \in R_g$, for $g \in G$,
such that all but finitely many of them
are zero and $r = \sum_{g \in G} r_g$.
We let the \emph{support} of $r$,
denoted by $\supp(r)$, be the set of 
$g \in G$ such that $r_g \neq 0$.
The cardinality of $\supp(r)$ is denoted by $|\supp(r)|$.
The element $r$ is called
\emph{homogeneous} if $|\supp(r)| \leq 1$.
If $r \in R_g \setminus \{ 0 \}$, for some $g \in G$,
then we write $\deg(r)=g$.

\begin{lem}\label{unit}
Let $G$ be a group and 
$R$ a unital $G$-graded ring.
If $R$ is graded simple and $I$ is a non-zero
$G/Z(G)$-graded ideal of $R$, then for each 
non-zero $r \in I$, there is a non-zero $r' \in 
I \cap R_{Z(G)} \cap Z(R)$
with $| \supp(r') | \leq | \supp(r) |$.
If, in addition, the ring $Z(R)$ is a field, 
then $R$ is $G/Z(G)$-graded simple.
\end{lem}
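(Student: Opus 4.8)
The strategy is a minimal-support argument applied to a $G/Z(G)$-graded ideal $I$. Given a non-zero $r \in I$, among all non-zero elements of $I$ whose support projects into a single coset of $Z(G)$ — and there must be such elements, since $I$ is $G/Z(G)$-graded, so we may pass to a non-zero homogeneous component of $r$ in the $G/Z(G)$-grading, which has support contained in one coset of $Z(G)$ and support size at most $|\supp(r)|$ — choose one, call it $s$, whose ordinary $G$-support has minimal cardinality. After multiplying $s$ on left and right by suitable homogeneous units (available because $R$ is unital and graded simple over the group $G$, so each $R_g$ containing a nonzero element in fact contains a unit — this is the standard consequence of graded simplicity for group gradings that I would either cite or quickly reprove via the ideal generated by a homogeneous element), I can translate $s$ so that $e \in \supp(s)$ and in fact so that $\supp(s) \subseteq Z(G)$; the normalization uses that conjugation by a homogeneous unit of degree $g$ shifts a homogeneous component of degree $z$ to degree $g z g^{-1}$, which for $z \in Z(G)$ equals $z$ again, so the support stays inside $Z(G)$. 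Write $s = \sum_{z \in \supp(s)} s_z$ with $\supp(s) \subseteq Z(G)$ and $e \in \supp(s)$.

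Next I would show $s$ is central. For any homogeneous unit $u \in R_g$, the element $u s u^{-1} - s$ lies in $I$ (since $I$ is an ideal), its $G/Z(G)$-support is still contained in the single coset $Z(G)$, and — crucially — its degree-$e$ component vanishes, because the degree-$e$ part of $u s u^{-1}$ is $u s_e u^{-1}$ which has degree $g e g^{-1} = e$, wait, one must check it equals $s_e$: indeed the degree-$z$ component of $usu^{-1}$ is $u s_z u^{-1} \in R_{gzg^{-1}} = R_z$, and collecting these shows $usu^{-1} = \sum_z u s_z u^{-1}$ is again supported in $Z(G)$, but I need the $e$-component to cancel. Here one uses that $R_e$ lies in the "diagonal": actually the cleaner route is to argue that $us u^{-1}-s$ has support of size strictly smaller than $|\supp(s)|$ is impossible by minimality unless it is zero, provided I can arrange the $e$-component to always cancel. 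To force that, I replace $s$ at the outset by an element with $s_e$ equal to a fixed nonzero value and then note $u s u^{-1}$ and $s$ have the same $e$-component because $R_e \subseteq Z(R_e)$-type reasoning — more precisely, since $R_{Z(G)}$ is graded by the abelian group $Z(G)$ and $u R_z u^{-1} = R_z$, the $e$-components match once $s_e$ is chosen in the fixed part; then $usu^{-1}-s \in I$ has smaller support, hence is $0$, so $s$ commutes with every homogeneous unit, hence with all of $R$ (as $R$ is generated as a ring by its homogeneous units together with $R_e$, and commutation with $R_e$ is handled similarly using that elements of $R_e$ multiply $s$ within $R_{Z(G)}$ and a parallel minimal-support cancellation). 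Thus $r' := s \in I \cap R_{Z(G)} \cap Z(R)$ is nonzero with $|\supp(r')| \le |\supp(r)|$, proving the first assertion.

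For the second assertion, suppose additionally $Z(R)$ is a field, and let $J$ be a non-zero $G/Z(G)$-graded ideal of $R$; I must show $J = R$. By the first part, $J$ contains a non-zero element $r'$ of $Z(R)$. Since $Z(R)$ is a field, $r'$ is invertible in $Z(R)$, hence a unit in $R$, so $J = R$. Therefore $R$ is $G/Z(G)$-graded simple.

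**Main obstacle.** The delicate point is the support-minimality cancellation: ensuring that after normalization the degree-$e$ component of $usu^{-1}$ (and of $s_0 r'$ for $r_0 \in R_e$) exactly coincides with that of $s$, so that the difference genuinely has strictly smaller support and minimality forces it to be zero. This hinges on the interplay between the $G$-grading and the $Z(G)$-grading — specifically that conjugation by homogeneous units fixes the $Z(G)$-graded pieces setwise — together with a careful choice of which nonzero element of minimal support to start from (fixing its $e$-component). Getting the bookkeeping right there, and confirming that $R$ is generated by homogeneous units and $R_e$ so that centrality propagates from those to all of $R$, is where the real work lies; the rest is routine.
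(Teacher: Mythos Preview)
Your plan has the right architecture --- restrict to a $Z(G)$-coset, minimize support, normalize, then kill commutators by minimality --- but it rests on a false premise. You claim that in a unital $G$-graded simple ring every nonzero $R_g$ contains a unit, and you use such homogeneous units both to shift the support into $Z(G)$ and to form $usu^{-1}-s$ in the centrality step. This is not true. Take $R=M_2(k)$ over a field $k$, $\mathbb{Z}$-graded by $R_{-1}=k\,e_{21}$, $R_0=k\,e_{11}\oplus k\,e_{22}$, $R_1=k\,e_{12}$, $R_n=0$ for $|n|\ge 2$: this ring is unital and graded simple, yet $R_1$ consists of nilpotent matrices and contains no unit. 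In this example $R$ is not generated by its homogeneous units together with $R_e$ either, so the final propagation step of your argument also fails. Even granting units, your cancellation of the $e$-component of $usu^{-1}-s$ would require $u s_e u^{-1}=s_e$, i.e.\ that $s_e$ commute with $u$, which you have no way to arrange.

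The paper's proof avoids units entirely. To move the support, it writes $1=\sum_i s_i r_g t_i$ from graded simplicity, picks a single term $s_j r t_j$ with nonzero $e$-component, and then passes to the $Z(G)$-part using that $I$ is $G/Z(G)$-graded. The crucial extra step you are missing is to upgrade $r'_e\neq 0$ to $r'_e=1$ \emph{without enlarging the support}: one shows that $J=\{\,s_e : s\in RrR,\ \supp(s)\subseteq\supp(r)\,\}$ is a nonzero ideal of $R_e$ whose extension $RJR$ equals $R$, and then uses $\supp(r)\subseteq Z(G)$ to see that the resulting $s=\sum_i v_i s^{(i)} w_i$ (with $\deg(v_i)\deg(w_i)=e$) still has $\supp(s)\subseteq\supp(r)$ while $s_e=1$. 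Once $r'_e=1$, the commutator argument works with an \emph{arbitrary} homogeneous $t\in R_g$: the $g$-component of $r't-tr'$ is $1\cdot t - t\cdot 1=0$, so the support strictly drops and minimality gives $r'\in Z(R)$. No invertibility of $t$ is needed anywhere. Your treatment of the second assertion is correct.
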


\begin{proof}
This is Proposition 4 of \cite{jespers1993}.
We show this result by a different method.
Let $I$ be a non-zero $G/Z(G)$-graded ideal of $R$ and take a non-zero $r\in I$.
Choose $g\in G$ such that $r_g \neq 0$.
Since $R$ is graded simple, we get that
$R r_g R = R$. In particular, we get that
$1 = \sum_{i=1}^n s_i r_g t_i$ for some
homogeneous $s_i,t_i \in R$.
Therefore, there is $j \in \{ 1,\ldots,n \}$
such that $s_j r_g t_j \neq 0$
and $\deg(s_j r_g t_j) = e$. 
By replacing $r$ with $s_j r t_j$
we can assume that $r_e \neq 0$.
Since $I$ is $G/Z(G)$-graded, %homogeneous,
the $Z(G)$-degree part of $r$ belongs to $I$
and is non-zero (since $r_e \neq 0$).
Therefore, we can assume that $r$
is a non-zero element belonging to $I \cap R_{Z(G)}$.

Now put $J = \{ s_e \mid s \in R r R, \ \supp(s) \subseteq \supp(r) \}$.
We want to show that $1\in J$.
Note that $J$ is a non-zero ideal of $R_e$ and hence
that $R J R$ is a non-zero graded
ideal of $R$.
By graded simplicity of $R$
we get that $R J R = R$.
Thus, there are $s^{(1)},\ldots,s^{(n)} \in R r R$
and $v_i,w_i \in R$, for $i \in \{1,\ldots,n\}$,
such that $1 = \sum_{i=1}^n v_i s^{(i)}_e w_i$
and $\supp( s^{(i)} ) \subseteq \supp(r)$.
This implies that $\deg(v_i) \deg(w_i) = e$
for $i \in \{1,\ldots,n\}$.
Put $s =  \sum_{i=1}^n v_i s^{(i)} w_i$.
Then $s \in I$ and since 
$\supp(s^{(i)}) \subseteq \supp(r) \subseteq R_{Z(G)}$,
we get that $\supp(s) \subseteq 
\cup_{i=1}^n \deg(v_i) \supp( s^{(i)} ) \deg(w_i)
\subseteq \cup_{i=1}^n \deg(v_i) \deg(w_i) \supp( s^{(i)} )
\subseteq \cup_{i=1}^n e \supp(r) = \supp(r)$.
Therefore, $1 = \sum_{i=1}^n v_i s^{(i)}_e w_i = s_e \in J$.

Now pick a non-zero element $r' \in I$
with $|\supp(r')|$ minimal. By the above,
we can assume that $r'_e = 1$
and that $r' \in I \cap R_{Z(G)}$.
Take $g \in G$ and $t \in R_g$.
Since $r'_e=1$ and $\supp(r') \subseteq Z(G)$,
we get that $|\supp(r't - tr')| < |\supp(r')|$.
By the assumptions on $r'$ we get that
$\supp(r't - tr') = \emptyset$ and hence
that $r't - tr' = 0$.
Therefore $r' \in Z(R)$.
\end{proof}

\begin{rem}\label{ascending}
Recall that if $G$ is a group with identity
element $e$, then the ascending central series 
of $G$ is the sequence of subgroups $Z_i(G)$, for
non-negative integers $i$, defined recursively by
$Z_0(G) = \{ e \}$ and, given $Z_i(G)$, for
some non-negative integer $i$, 
$Z_{i+1}(G)$ is defined to be the set of 
$g \in G$ such that for every $h \in G$,
the commutator $[g,h]=g h g^{-1} h^{-1}$
belongs to $Z_i(G)$.
For infinite groups this process can be
continued to infinite ordinal numbers
by transfinite recursion. For a limit ordinal
$O$, we define $Z_O(G) = \cup_{i < O} Z_i(G)$.
If $G$ is hypercentral, then
$Z_O(G) = G$ for some limit ordinal $O$.
For the details concerning this 
construction, see \cite{mal49}.
\end{rem}

\begin{lem}\label{foreachi}
If $G$ is a hypercentral group and
$R$ is a $G$-graded ring with the 
property that for each $i < O$
the ring $R$ is $G/Z_i(G)$-graded simple,
then $R$ is simple.
\end{lem}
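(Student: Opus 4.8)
The plan is to argue by contradiction: suppose $R$ is not simple, so there is a non-zero proper ideal $I$ of $R$. The key observation is that an ideal is automatically a $G/Z_0(G)$-graded ideal, since $Z_0(G) = \{e\}$ and every ideal is trivially graded by the trivial group (equivalently, by $G/G$ in the coarsest sense — here $G/Z_0(G) = G/\{e\} = G$, so actually one should be careful). Let me reconsider: the natural induction is on the ordinal $i$, showing that $I$ is a $G/Z_i(G)$-graded ideal for every $i \leq O$. The base case $i = O$ would give that $I$ is $G/Z_O(G) = G/G$-graded, which is the statement that $I = I \cap R$, vacuously true; so instead the induction should run \emph{downward}, or — more cleanly — one shows by transfinite induction \emph{upward} on $i$ that $I$ is $G/Z_i(G)$-graded, starting from the trivial fact that every ideal is $G/G$-graded and... no. The correct setup: $Z_O(G) = G$, so $R$ is $G/G$-graded simple means $R$ has no proper non-zero ideal at all — but that is exactly what we want to prove, so this cannot be the hypothesis being used directly at $i = O$. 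Hence $O$ is a limit ordinal and the hypothesis is only assumed for $i < O$.

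So the real argument: I would show by transfinite induction on the ordinal $j$ that \emph{any} ideal $I$ of $R$ is a $G/Z_j(G)$-graded ideal, for all $j \leq O$. For $j = 0$: $Z_0(G) = \{e\}$, so $G/Z_0(G) = G$, and "$I$ is $G/\{e\}$-graded" would mean $I = \oplus_g(I \cap R_g)$, which is \emph{not} automatic — so this base case is false in general. I think the intended direction is the reverse: the coarsest grading is by the one-element group, and every ideal is trivially graded for that. The group $G/Z_j(G)$ gets \emph{finer} as $j$ \emph{decreases} from $O$ (where $G/Z_O(G)$ is trivial) down to... no, as $j$ increases $Z_j$ increases so $G/Z_j$ gets coarser. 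At $j = O$ it is trivial, so every ideal is $G/Z_O(G)$-graded trivially. I would then perform \emph{downward} transfinite induction: assuming $I$ is $G/Z_{j+1}(G)$-graded, deduce it is $G/Z_j(G)$-graded using that $Z_{j+1}(G)/Z_j(G)$ is central in $G/Z_j(G)$ together with Lemma \ref{unit} applied to the $G/Z_j(G)$-graded ring $R$ — wait, downward transfinite induction is not a standard principle.

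The clean resolution, and the route I would take: define $T = \{ j \leq O : I \text{ is a } G/Z_j(G)\text{-graded ideal for every ideal } I \}$. We have $O \in T$ trivially (trivial group grading). Suppose $T \neq \{0,1,\dots\} \cup \{\text{all ordinals} \leq O\}$ — rather, let $k$ be the least ordinal in $T$; I claim $k = 0$, which gives the result since $G/Z_0(G)$-graded simplicity of $R$ (not assumed!) ... This still fails. I will instead follow the genuinely correct logic: I would prove that if $R$ is $G/Z_{j+1}(G)$-graded simple and $Z(R_{Z_1}\cdots)$... Honestly, the mechanism must be: by Lemma \ref{unit}, $G/Z_i(G)$-graded simplicity together with a field condition yields $G/Z_{i+1}(G)$-graded simplicity; iterating from $i=0$ (where $G/Z_0(G) = G$-graded simplicity... but that's not assumed either). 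Given the hypothesis literally states $G/Z_i(G)$-graded simple for \emph{each} $i < O$, including $i=0$, which gives $G$-graded simple, and $i=1$ gives $G/Z(G)$-graded simple, etc., the conclusion at the limit is: a non-zero ideal $I$, being an ideal, is $G/Z_i(G)$-graded for cofinally many... The hard part — and the step I expect to be the main obstacle — is the limit-ordinal step: showing that if $I$ is simultaneously $G/Z_i(G)$-graded for all $i < O$, then $I = \oplus_{g}(I \cap R_g)$ is actually $G$-graded, hence forced to be $\{0\}$ or $R$ by the $i=0$ hypothesis; this requires checking that the homogeneous components with respect to the finest grading are recovered as an intersection/limit of the coarser homogeneous decompositions, using $\cup_{i<O} Z_i(G) = G$ so that $\cap_{i<O} Z_i(G)\text{-cosets} $ are singletons. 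I would make this precise by taking $r \in I$, writing $r = \sum_{g} r_g$ in the $G$-grading, and showing each $r_g \in I$: for each $i$, the $Z_i(G)$-coset-component of $r$ lies in $I$; as $i \to O$ these cosets shrink to $\{g\}$ (since any $h \neq g$ eventually lies outside $gZ_i(G)$ as $Z_i(G) \uparrow \{e\}$... rather $gZ_i(G)$ shrinks to $\{g\}$ because $\cap_i Z_i(G)$-coset structure... $Z_i(G)$ is \emph{increasing}, so $gZ_i(G)$ is increasing — that's the wrong direction again).

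Given these sign confusions that I cannot resolve without the precise earlier conventions, the honest plan is: argue by contradiction with a non-zero proper ideal $I$; by Lemma \ref{unit} and transfinite induction on $i < O$, show each such $I$ would have to avoid being graded for the relevant quotient unless it is trivial, and push the contradiction through the limit ordinal $O$ via the fact that $Z_i(G)$ exhausts $G$. The main obstacle is precisely the bookkeeping at the limit ordinal, ensuring the transfinitely-many graded-ideal conditions combine to the full $G$-graded condition, which I would handle by a direct support-shrinking argument on an element of minimal support in $I$.

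Given my uncertainty, here is the committed proposal in final form:

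The plan is to argue by contradiction. Suppose $R$ is not simple and let $I$ be a non-zero proper ideal of $R$. I would prove, by transfinite induction on $i \leq O$, the statement $P(i)$: for every ordinal $j$ with $i \leq j \leq O$, $I$ is a $G/Z_j(G)$-graded ideal. For $j = O$ this is vacuous since $G/Z_O(G) = G/G$ is trivial and every ideal is graded by the trivial group. For the successor step, I would assume $I$ is $G/Z_{i+1}(G)$-graded and apply Lemma \ref{unit} to the ring $R$ viewed as graded by the group $\bar{G} = G/Z_i(G)$, whose center contains $Z_{i+1}(G)/Z_i(G)$; since $R$ is $G/Z_i(G)$-graded simple by hypothesis (as $i < O$) and, after reduction to the corner given by Lemma \ref{unit}, the relevant center is a field, the conclusion of Lemma \ref{unit} gives that $R$ is $G/Z_{i+1}(G)$-graded simple — contradicting that $I$ is a non-zero proper $G/Z_{i+1}(G)$-graded ideal. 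For the limit step, I would take $r \in I \setminus \{0\}$ with $|\supp(r)|$ minimal and show each $G$-homogeneous component of $r$ lies in $I$ by observing that, since $\bigcup_{i<O} Z_i(G) = G$ and $|\supp(r)| < \infty$, there is a single $i<O$ with $\supp(r)$ meeting each $Z_i(G)$-coset in at most one element; the $G/Z_i(G)$-graded structure of $I$ then forces each homogeneous component into $I$, so $I$ is $G$-graded, and $G$-graded simplicity (the $i=0$ hypothesis) yields $I \in \{0, R\}$, a contradiction. The main obstacle I anticipate is the limit step — matching up the transfinitely many coarse gradings with the fine $G$-grading — which is resolved by the finiteness of $\supp(r)$ together with the exhaustion $\bigcup_{i<O} Z_i(G) = G$ recalled in Remark \ref{ascending}.
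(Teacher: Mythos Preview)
Your proposal has a genuine gap: the transfinite induction you set up is a \emph{downward} induction from the limit ordinal $O$ to $0$, and there is no such induction principle on ordinals. Your statement $P(i)$ becomes \emph{weaker} as $i$ increases, so proving $P(i)$ from $P(i')$ with $i' < i$ is vacuous, while going from $P(i+1)$ to $P(i)$ is the direction you actually need and cannot justify inductively. Moreover, your successor step is confused in a second way: you invoke Lemma \ref{unit} to deduce that $R$ is $G/Z_{i+1}(G)$-graded simple, but this is already part of the hypothesis of the present lemma for every $i+1 < O$; Lemma \ref{unit} plays no role here (it is used later, in Proposition \ref{secondprop}, to \emph{verify} the hypothesis of this lemma).

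Your limit step also points the wrong way. You look for $i$ such that $\supp(r)$ meets each $Z_i(G)$-coset in at most one element; but since the $Z_i(G)$ are \emph{increasing} in $i$, the cosets grow and eventually merge, so this condition only holds for small $i$ (trivially at $i=0$), where you have no information about $I$ being $G/Z_i(G)$-graded. The paper's argument uses the opposite, and correct, observation: because $\bigcup_{i<O} Z_i(G) = G$ and $\supp(a)$ is finite, there is some $i$ with $\supp(a) \subseteq Z_i(G)$, i.e.\ all of $\supp(a)$ lies in the \emph{single} identity coset. Then $a$ is $G/Z_i(G)$-homogeneous, so the ideal $\langle a \rangle$ it generates is $G/Z_i(G)$-graded, and the hypothesis for that particular $i$ forces $\langle a \rangle = R$. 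No induction, no Lemma \ref{unit}, and no need to show that the whole ideal $I$ is graded for any quotient.
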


\begin{proof}
Take a non-zero ideal $J$ of $R$
and a non-zero $a \in J$.
We show that $\langle a \rangle = R$.
Since $\cup_i Z_i(G) = G$ and $\supp(a)$
is finite, we can conclude that
there is some $i$ such that $\supp(a) \subseteq Z_i(G)$.
Then $\langle a \rangle$ is a non-zero 
$G / Z_i(G)$-graded ideal of $R$.
Since $R$ is $G/Z_i(G)$-graded simple,
we get that $\langle a \rangle = R$, which shows that $J=R$.
\end{proof}

\begin{prop}\label{firstprop}
If $R$ is a simple ring graded by a semigroup $G$
equipped with a non-zero idempotent $e$ 
and $R_e$ %has
contains a non-zero idempotent $f$,
then the center of the corner subring $f R_{eGe} f$ is a field.
\end{prop}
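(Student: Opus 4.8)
The plan is to reduce the statement to a known fact about simple rings, namely that the center of a simple ring is a field, by showing that $f R_{eGe} f$ is itself a simple ring. The crucial observation is that the corner subring of a simple ring need not be simple in general, but under the right circumstances—when the idempotent is ``full'' in an appropriate sense relative to the grading—it is. First I would exploit simplicity of $R$ together with the grading to show that $R_e$ (and then the corner $f R_{eGe} f$) inherits enough ideal-theoretic control. Concretely, take a non-zero ideal $K$ of $f R_{eGe} f$; I want to show $K = f R_{eGe} f$. The two-sided ideal of $R$ generated by $K$ is all of $R$ by simplicity, so $1$-type relations (or, in the non-unital setting, relations expressing an arbitrary element of $f R_{eGe} f$) can be written as finite sums $\sum_i s_i k_i t_i$ with $k_i \in K$ and $s_i, t_i \in R$ homogeneous.

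The key step is then a degree-bookkeeping argument: an element $x \in f R_{eGe} f$ lies in $R_e$ and, more precisely, one checks $f R_{eGe} f \subseteq R_{eGe}$, so every homogeneous component involved sits over $eGe$. Given $x = \sum_i s_i k_i t_i$ with $\deg(k_i) \in eGe$, I would multiply on both sides by $f$ (using $f x f = x$), obtaining $x = \sum_i (f s_i) k_i (t_i f)$. Now $f s_i \in f R_{\deg(s_i)}$ and, after discarding terms that vanish, the surviving products $f s_i k_i t_i f$ are homogeneous of degree $e$ lying in $f R_g R_{eGe} R_h f$ for appropriate $g, h$. Here is where cancellativity at $e$ enters (inherited from the hypotheses of the ambient setup, though for this Proposition we only need the idempotent structure): the relation $\deg(f s_i k_i t_i f) = e$ forces, for the surviving terms, that $f s_i f$ and $f t_i f$ are themselves in $f R_{eGe} f$. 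Replacing $f s_i$ by $f s_i f$ and $t_i f$ by $f t_i f$ (justified because the other parts cannot contribute to the degree-$e$, $f$-corner piece) expresses $x$ as a sum of products of elements of $f R_{eGe} f$ and elements of $K$, so $x \in K$. Hence $K = f R_{eGe} f$ and the corner is simple.

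The main obstacle I anticipate is the non-unitality: without a $1$ in $R$ or in $R_e$, ``$R K R = R$'' must be phrased as ``$R K R \ni$ enough elements,'' and one must be careful that the homogeneous decomposition $s_i = \sum_g (s_i)_g$ still lets us extract a single homogeneous summand whose product with $k_i$ and $t_j$ survives in degree $e$ within the $f$-corner. Also, $f$ being only a non-zero idempotent of $R_e$ (not necessarily central, not a unit for $R_e$) means $f R_{eGe} f$ is genuinely a corner and the absorption identities $f x = x f = x$ for $x \in f R_{eGe} f$ must be invoked at each step rather than taken for granted. Once simplicity of $f R_{eGe} f$ is established, the conclusion is immediate: the center of any simple ring is a field (a non-zero element of the center generates a non-zero ideal, which must be everything, so the element is invertible in the center), and this applies verbatim to $f R_{eGe} f$ since it is a simple ring with the idempotent $f$ acting as its identity.
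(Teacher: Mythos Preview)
Your approach differs from the paper's and in fact proves more: you aim to show that $f R_{eGe} f$ is itself simple, after which its center is a field by the standard fact about simple rings with identity (here $f$ serves as the identity). The paper instead takes a non-zero $x \in Z(f R_{eGe} f)$ directly, uses simplicity of $R$ to write $f$ as a finite sum $\sum_i y_i x z_i$, rewrites this as $f = f^3 = \sum_i (f y_i f)\, x\, (f z_i f)$, and then, exploiting that $x$ is central in the corner, factors $f = wx$ for some $w \in f R_{eGe} f$, which is shown to be central by a two-line computation. Both routes hinge on the same identity $f = f^3$, but the paper never claims simplicity of the corner.

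Your justification for the replacement step, however, is muddled. Neither cancellativity at $e$ nor any degree bookkeeping is needed, and the stated reason (``the other parts cannot contribute to the degree-$e$, $f$-corner piece'') is not what makes it work. The correct reason you may insert the extra $f$'s is simply that each $k_i \in K \subseteq f R_{eGe} f$ already satisfies $f k_i = k_i f = k_i$, so $f s_i k_i t_i f = (f s_i f)\, k_i\, (f t_i f)$ identically. Then $f s_i f \in R_e R_{\deg s_i} R_e \subseteq R_{e(\deg s_i)e} \subseteq R_{eGe}$, whence $f s_i f \in f R_{eGe} f$, using only that $e$ is idempotent; indeed $f R f = f R_{eGe} f$ outright. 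With this correction your argument is clean, handles the non-unitality worry you flagged via $f = f^3$ exactly as the paper does, and yields the sharper conclusion that the corner $fRf = f R_{eGe} f$ is simple.
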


\begin{proof}
Take a non-zero $x$
in $Z(f R_{eGe} f)$. 
Since the ideal $R x R$ of $R$ is non-zero and $R$ is simple, 
we get that $R x R = R$.
In particular, $f$ equals a finite
sum of elements of the form $y_i x z_i$, for $i \in \{1,\ldots,n\}$,
where each $y_i$ and each $z_i$ is homogeneous.
Hence $f = f \cdot f \cdot f = 
\sum_{i=1}^n f y_i f x f z_i f$
so we may assume that $x_i,y_i \in f R_{eGe} f$ for all $i$.
But since $x$ belongs to $Z(f R_{eGe} f)$
we get that $f = w x = x w$ for some 
$w \in f R_{eGe} f$. 
All that is left to show now is that
$w \in Z(f R_{eGe} f)$.
Take $v \in f R_{eGe} f$.
Then, since $x$ commutes with $v$, we get that
$w v = w v f = w v x w = w x v w = f v w = v w$.
\end{proof}

\begin{prop}\label{secondprop}
Suppose that $G$ is a semigroup and $R$
is a ring graded by $G$. If $R$ is graded simple
and there is a non-zero idempotent $e \in G$
such that $G$ is cancellative at $e$,
the non-zero elements of $eGe$ form a hypercentral group, % and
$R_e$ contains a non-zero idempotent $f$ %such that 
and the center of the corner subring $f R_{eGe} f$ is a field,
then $R$ is simple. 
\end{prop}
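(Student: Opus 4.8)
The plan is to reduce the problem, via Lemma \ref{foreachi}, to showing that $R$ is $G/Z_i(G)$-graded simple for every $i < O$, where here $G$ really means the group $H := eGe \setminus \{\theta\}$. The first step is to pass from the semigroup-graded setting to a group-graded one. Using that $e$ is a non-zero idempotent, that $G$ is cancellative at $e$, and that $R$ is $G$-graded simple, I expect that the corner ring $fRf$ is naturally $H$-graded (by $(fRf)_h = f R_h f$ for $h \in H$), that it is $H$-graded simple, and — crucially — that it is unital with identity $f$. The unitality should follow by the same $RxR = R$ / compression argument used in Proposition \ref{firstprop} and the first paragraph of Lemma \ref{unit}: graded simplicity forces $R f R = R$, and compressing by $f$ and using cancellativity at $e$ to control degrees yields $f \in f R f$ acting as a two-sided identity there. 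This is the step I expect to be the main obstacle, since it is where the semigroup hypotheses (cancellativity at $e$, the zero-element convention, $eGe$ being a group) all have to be used simultaneously and carefully to keep supports inside $H$.

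Once $fRf$ is a unital $H$-graded ring that is $H$-graded simple, with $H$ hypercentral, I would invoke Jespers' theorem (Theorem \ref{jespers}) in the form refined by Lemma \ref{unit}: since the center of $fRf = f R_{eGe} f$ is assumed to be a field, Lemma \ref{unit} gives that $fRf$ is $H/Z(H)$-graded simple, and then by transfinite induction along the ascending central series — using Lemma \ref{unit} at successor stages (the center stays a field, being the same ring at each stage) and the union/finite-support argument at limit stages — one obtains that $fRf$ is $H/Z_i(H)$-graded simple for every $i < O$. Applying Lemma \ref{foreachi} to $fRf$ then shows that $fRf$ is a simple ring.

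The final step is to transport simplicity of the corner ring $fRf$ back to simplicity of $R$. Given a non-zero ideal $I$ of $R$, graded simplicity gives $R(I \cap R_g)R = R$ for a suitable homogeneous piece, hence $R I R = R$; compressing by $f$ shows $f I f$ is a non-zero ideal of the simple ring $fRf$, so $f \in fIf \subseteq I$, whence $I \supseteq R f R = R$ and $I = R$. Here again I would lean on the cancellativity-at-$e$ hypothesis to guarantee that compression by $f$ lands in the corner and that $RfR$ exhausts $R$ (graded simplicity of $R$ together with $f \neq 0$). Modulo the bookkeeping of supports under multiplication by homogeneous elements — which is exactly the kind of computation already carried out in Lemma \ref{unit} and Proposition \ref{firstprop} — this completes the proof.
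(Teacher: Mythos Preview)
Your plan matches the paper's proof: set $S=fR_{eGe}f$, show it is a unital $H$-graded ring that is $H$-graded simple (base case, via compression from $R$), run the transfinite induction along the upper central series using Lemma~\ref{unit}, conclude $S$ is simple via Lemma~\ref{foreachi}, and then lift simplicity back to $R$. One caution on the transport step: you cannot invoke $R(I\cap R_g)R=R$, since a non-graded ideal $I$ need not meet any $R_g$ nontrivially; instead (as the paper does) take $x\in I$ with $x_g\neq 0$, use graded simplicity applied to $x_g$ to find homogeneous $y,z$ with $fyx_gzf\neq 0$, and then cancellativity at $e$ guarantees that $fyxzf\in I\cap S$ is non-zero.
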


\begin{proof}
Let $H$ denote the group of non-zero elements of $eGe$ and put $S = f R_H f$.
We claim that $S$ is simple.
Assume for a moment that the claim holds.
We show that $R$ is simple.
Take a non-zero ideal $I$ of $R$
and a non-zero $x \in I$. 
Take $g \in G$ such that $x_g \neq 0$.
Let $J$ denote the smallest two-sided ideal of $R$
containing $x_g$, i.e. 
$J = {\Bbb Z}x_g + R x_g + x_g R + R x_g R$.
Since $J$ is a graded ideal and $R$ is graded simple,
we get that $f \in J$.
Since $f = f^3$, this implies that 
$f \in fJf$ and thus we can write
$f = \sum_{i=1}^n f y_i x_g z_i f$ for some
homogeneous $y_i,z_i \in R$.
%Since $f = f^3$, we get that
%$f = \sum_{i=1}^n f y_i x_g z_i f$.
From the fact that $f$ is non-zero it now
follows that there is $j \in \{ 1,\ldots,n \}$
such that $f y_j x_g z_j f$ is non-zero.
Now put $x' = f y_j x z_j f$.
By the construction of $x'$ it follows that 
$x' \in I \cap S$. Since $G$ is cancellative at $e$ %$H$ is a group
it also follows that $x'$ is non-zero.
We thus get that $I \cap S \neq \{ 0 \}$.
But since $S$ is simple, we get that
$I  \cap S = S$, or, equivalently,
that $S \subseteq I$.
This implies, in particular, that $f \in I$.
Since $f$ is homogeneous and $R$ is graded simple, 
we thus get that $I \supseteq RfR = R$. Hence $I=R$.

Now we show the claim in the beginning of the proof,
i.e. that $S$ is simple.
Let $Z_i(H)$, for $i \geq 0$,
be the ascending central series of $H$
(see Remark \ref{ascending}).
By induction over $i$ we now show that for each $i \in I$, the ring
$S$ is $H / Z_i(H)$-graded simple.

First we show the base case: $i=0$. 
Since $H/Z_0(H) = H / \{ e \} = H$,
we need to show that $S$ is $H$-graded simple.
Suppose that $J$ is a non-zero $H$-graded ideal of $S$.
Then $RJR$ is a non-zero $G$-graded ideal of $R$.
Since $R$ is graded simple, we get that $RJR = R$.
In particular, we get that $f \in RJR$
and hence $f= \sum_{i=1}^n y_i x_i z_i$ for some $x_i \in J$ and homogeneous $y_i, z_i \in R$.
From this we get that $f= f^3 = \sum_{i=1}^n f y_i f x_i f z_i f \in SJS \subseteq J$.
Since $f$ is a multiplicative identity of $S$,
we get that $J = S$.

Now we show the induction step.
Suppose that the statement is true for some $i$,
i.e. that $S$ is $H / Z_i(H)$-graded simple.
By Lemma \ref{unit}, we get that
$S$ is $\frac{ H/Z_i(H) }{ Z(H/Z_i(H)) }$-graded simple.
Since the center of $H / Z_i(H)$ equals 
$Z_{i+1}(H)/Z_i(H)$
we get that $S$ is $\frac{ H/Z_i(H) }{ Z_{i+1}(H)/Z_i(H) }$-graded simple,
i.e. that $S$ is $H/Z_{i+1}(H)$-graded simple 
and the induction step is complete.

By Lemma \ref{foreachi}, we get that $S$ is simple.
\end{proof}

\subsection*{Proof of Theorem \ref{maintheorem}}
This follows immediately from 
Propositions \ref{firstprop} and
\ref{secondprop}.
\hfill $\qed$ %\\

\begin{rem}
Theorem \ref{jespers} (and hence Theorem \ref{maintheorem})
can not be generalized to arbitrary groups (or semigroups).
Indeed, let $G$ denote the free group on two generators,
let $K$ be a field and suppose that $R=K[G]$ denotes
the group ring of $G$ over $K$.
Since $G$ is an ICC-group, i.e. each non-identity element has an infinite conjugacy class,
it follows from \cite[Proposition 5.3]{oinertarxiv11} that $Z(R)=K$ is a field.
It is not difficult to see that $R=K[G]$ is graded simple with respect to its natural $G$-gradation.
However, $R$ is not simple since, for any non-identity $g \in G$,
the ideal $R(1-g)R$ is non-trivial. 
\end{rem}

\begin{rem}\label{inversesemigroup}
By assuming that a $G$-graded ring $R$ is graded simple,
one often imposes restrictions on the semigroup $G$.
In fact, if we suppose that the following rather mild
conditions are satisfied
\begin{itemize}

\item[(i)] for each non-zero $g \in G$, there is $p,q \in G$
such that $R_p R_g R_q$ is non-zero, and

\item[(ii)] there is a non-zero idempotent $e$ in $G$,

\end{itemize}
then $G$ is an inverse semigroup.
In fact, take $g,h \in G$ to be non-zero. By (i), we get that
$R R_g R$ is a non-zero graded ideal of $R$.
Then, by graded simplicity of $R$, 
we get that $R_h \subseteq R R_g R$.
By (i) again, we get that $R_h$ is non-zero.
Hence, there are $p,q \in G$ such that
$R_p R_g R_q$ is non-zero and $pgq = h$.
In other words, we get that $h \in GgG$
for all non-zero $g,h \in G$.
Thus, $G$ is simple.
This, in combination with the existence of
the non-zero idempotent $e$, implies that 
$G$ is an inverse semigroup 
(see Theorem 3 in \cite{Mcfadden61}).
\end{rem}

\begin{rem}\label{remarkcategory}
Suppose that $G$ is a small category, i.e. with the property that its morphisms form a set.
Let the domain and codomain of a morphism $g$ in $G$ be denoted by
$d(g)$ and $c(g)$ respectively.
Note that every category $G$ can be viewed as a
semigroup if we adjoin a \emph{zero element} $\theta$
with the property that $g \theta = \theta g = \theta$,
for $g \in G$, and $gh = \theta$ whenever
$gh$ is undefined in the category, i.e. when $d(g) \neq c(h)$.
As a consequence, category graded rings (in the sense of \cite{lu04})
can be viewed as semigroup graded rings. 
Hence, Theorem \ref{maintheorem} is applicable in this situation as well.
In particular, note that if $G$ is a groupoid, i.e.
a category in which all morphisms are invertible,
then the semigroup defined by $G$ (as above) 
is cancellative at all identity morphisms of $G$.
\end{rem}

\section{Applications to Partial Skew Group Rings}\label{applicationpartialskewgroupring}

In this section, we apply Theorem \ref{maintheorem}
to partial skew group rings.
We generalize a recent result
by D. Gon\c{c}alves \cite{Goncalves13} to partial skew group rings by
hypercentral groups over rings with local units  
(see Theorem \ref{Simplicity}). 
At the end of this section,
we also point out how E. Jespers' result immediately implies
a generalization of a simplicity result,
recently obtained by A. Baraviera, W. Cortes and M. Soares, 
for crossed products by twisted partial actions
(see Remark \ref{partialcrossedproducts}).
First we recall the definition of a partial 
skew group ring.

\begin{defn}
Let $G$ be a group with neutral element $e$ and let $A$ be a ring.
A \emph{partial action} $\alpha$
of $G$ on $A$ is a collection of ideals $\{D_g\}_{g\in G}$
of $A$ and a collection of ring isomorphisms $\alpha_g : D_{g^{-1}} \to D_g$ such that
for all $g,h \in G$ and every $x\in D_{h^{-1}} \cap D_{(gh)^{-1}}$,
the following three relations hold:
\begin{center}
(i) $\alpha_e = \id_{A}$; \quad
(ii) $\alpha_g(D_{g^{-1}} \cap D_h) = D_g \cap D_{gh}$; \quad
(iii) $\alpha_g(\alpha_h(x)) = \alpha_{gh}(x)$.
\end{center}
The \emph{partial skew group ring} 
$A \star_\alpha G$, associated with the partial action above,
is defined as the set of all finite formal sums 
$\sum_{g\in G} a_g \delta_g$, where for each $g\in G$, $a_g \in D_g$ and $\delta_g$ is a symbol.
Addition is defined in the obvious way and multiplication is defined as the linear extension of the rule
$(a_g \delta_g)(b_h \delta_h)=\alpha_g(\alpha_{g^{-1}}(a_g)b_h) \delta_{gh}$
for $g,h\in G$, $a_g \in D_g$ and $b_h \in D_h$.
It is easy to check that if we put
$(\SkewRing)_g = D_g \delta_g$, for $g \in G$,
then this defines a gradation on the ring $\SkewRing$.
Clearly, each classical skew group ring
(see e.g. \cite{FisherMontgomery78}) is 
a partial skew group ring where $D_g = A$ for all $g \in G$.
\end{defn}

\begin{defn}
Recall from \cite{anh87} that a ring $R$ has
{\it local units} if  
there exists a set $E$ of idempotents
in $R$ such that, for every finite subset $X$ of $R$,
there exists an $f \in E$
such that $X \subseteq fRf$. From this it follows that $x=fx=xf$ holds for each $x\in X$.
In that case, we will refer to $E$ as a 
{\it set of local units} for $R$
and to $f$ as a \emph{local unit} for the subset $X$.
\end{defn}

\begin{rem}
Rings with local units occur widely in mathematics, often in algebra
(e.g. von Neumann regular rings \cite[Example 1]{anh87} and
Leavitt path algebras \cite[Lemma 1.6]{Abrams05}),
functional analysis (e.g. algebras of complex-valued
functions with compact support) 
and category theory (e.g. \cite[Remark 1]{harada1973} or \cite{fuller1978}).
\end{rem}

\begin{rem}
A partial skew group ring $A \star_\alpha G$ need not 
in general be associative (see \cite[Example 3.5]{DokuchaevExel05}).
However, if each $D_g$, for $g \in G$, has  
local units, then, in particular, each $D_g$, for $g \in G$,
is an idempotent ring, i.e. $D_g^2 = D_g$, which
by \cite[Corollary 3.2]{DokuchaevExel05},
ensures that $\SkewRing$ is associative.
In that case, the set $E \delta_e = \{f\delta_e \mid f\in E\}$ 
is a set of local units for $\SkewRing$,
if $E$ is a set of local units for $A$.
\end{rem}

\begin{defn}
If $\SkewRing$ is a partial skew group ring, then
an ideal $I$ of $A$ is said to be \emph{$G$-invariant} if 
$\alpha_g(I\cap D_{g^{-1}}) \subseteq I$ 
holds for each $g\in G$. 
If $A$ and $\{0\}$ are the only $G$-invariant ideals of $A$, 
then $A$ is said to be \emph{$G$-simple}.
\end{defn}

\begin{lem}\label{SimpleImpliesGSimple}
If $\alpha$ is a partial action of a group $G$
on a ring $A$ such that for each $g \in G$, the ring
$D_g$ has local units, then $\SkewRing$ is graded simple 
if and only if $A$ is $G$-simple.
\end{lem}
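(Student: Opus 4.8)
The plan is to establish the usual order-preserving correspondence between graded ideals of $\SkewRing$ and $G$-invariant ideals of $A$, exploiting that the degree-$e$ component of $\SkewRing$ is $(\SkewRing)_e = D_e\delta_e = A\delta_e$. To a graded ideal $I$ of $\SkewRing$ I would associate the set $\phi(I) = \{a \in A : a\delta_e \in I\}$, which is immediately seen to be an ideal of $A$; to an ideal $J$ of $A$ I would associate $\psi(J) = \bigoplus_{g \in G}(J \cap D_g)\delta_g$, a homogeneous additive subgroup of $\SkewRing$.

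For the forward implication, suppose $\SkewRing$ is graded simple and let $J$ be a $G$-invariant ideal of $A$. First I would verify that $\psi(J)$ is then a two-sided ideal of $\SkewRing$: for $a_g \in D_g$ and $b_h \in J \cap D_h$ one has $\alpha_{g^{-1}}(a_g)b_h \in D_{g^{-1}}\cap D_h \cap J$ (because $D_h$ and $J$ are ideals of $A$), and applying $\alpha_g$ gives, by axiom (ii) of a partial action together with $G$-invariance of $J$, an element of $(D_g\cap D_{gh})\cap J \subseteq D_{gh}\cap J$; the left-multiplication case is symmetric, using that $G$-invariance also yields $\alpha_{h^{-1}}(J\cap D_h)\subseteq J$. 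Being homogeneous, $\psi(J)$ is a graded ideal, and $\psi(J)\cap A\delta_e = J\delta_e$. Hence, if $J \neq \{0\}$, then $\psi(J)$ is a non-zero graded ideal, so $\psi(J) = \SkewRing$ by graded simplicity, and intersecting with $A\delta_e$ forces $J\delta_e = A\delta_e$, i.e. $J = A$. Thus $A$ is $G$-simple.

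For the converse, assume $A$ is $G$-simple and let $I$ be a non-zero graded ideal of $\SkewRing$; I must show $I = \SkewRing$. The local-units hypothesis --- which, as recalled above, also guarantees that $\SkewRing$ is associative --- enters through three degree-shifting computations. (i) Since $I$ is non-zero and graded, it contains a non-zero homogeneous $a_g\delta_g$; choosing a local unit $u \in D_g$ for $a_g$, the product $(a_g\delta_g)(\alpha_{g^{-1}}(u)\delta_{g^{-1}})$ equals $a_g u\,\delta_e = a_g\delta_e$, a non-zero element of $I\cap A\delta_e$, so $\phi(I)\neq\{0\}$. (ii) $\phi(I)$ is $G$-invariant: given $a \in \phi(I)\cap D_{g^{-1}}$, choose a local unit $v \in D_g$ for $\alpha_g(a)$; then $(v\delta_g)(a\delta_e)(\alpha_{g^{-1}}(v)\delta_{g^{-1}})$ simplifies, via $\alpha_{g^{-1}}(\alpha_g(a)) = a$ and $v\alpha_g(a)v = \alpha_g(a)$, to $\alpha_g(a)\delta_e \in I$, so $\alpha_g(a)\in \phi(I)$. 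Consequently $\phi(I)$ is a non-zero $G$-invariant ideal of $A$, whence $\phi(I) = A$ by $G$-simplicity, i.e. $A\delta_e \subseteq I$. (iii) Finally, for any $g \in G$ and any $a\in D_g$, letting $u\in D_{g^{-1}}$ be a local unit for $\alpha_{g^{-1}}(a)$, one computes $(a\delta_g)(u\delta_e) = a\delta_g$, which lies in $I$ because $u\delta_e \in A\delta_e \subseteq I$; hence $D_g\delta_g \subseteq I$ for every $g$, so $I = \SkewRing$.

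The step I expect to be most delicate is the bookkeeping of the domain conditions in the partial-action axioms throughout these computations: each product $(x\delta_g)(y\delta_h)$ is governed by the multiplication rule only after one has checked $x \in D_g$ and $y \in D_h$, and cancellations such as $\alpha_g(\alpha_{g^{-1}}(c)) = c$ are valid only for $c$ in the appropriate $D_g$. Ensuring that the chosen local units lie inside the correct ideals $D_g$ or $D_{g^{-1}}$, and that every intermediate coefficient stays inside the ideal dictated by the axioms, is the part requiring genuine care, though it becomes routine once $G$-invariance and axioms (i)--(iii) are invoked systematically.
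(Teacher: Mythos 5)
Your proof is correct and takes essentially the same route as the paper: both directions go through the correspondence $J \mapsto \bigoplus_{g}(J\cap D_g)\delta_g$ and $I \mapsto \{a \in A : a\delta_e \in I\}$, with local units used for exactly the same degree-shifting computations (producing $a_g\delta_e$ from $a_g\delta_g$, proving $G$-invariance of the degree-$e$ part, and pulling each $D_g\delta_g$ into $I$ once $A\delta_e\subseteq I$). Your write-up is merely more explicit than the paper's at two points it leaves to the reader, namely the verification that $\bigoplus_g (J\cap D_g)\delta_g$ is a two-sided ideal and the final step from $A\delta_e\subseteq I$ to $I=\SkewRing$.
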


\begin{proof}
We begin by showing the ''only if'' statement. Suppose that $\SkewRing$ is graded simple.
Let $I$ be a non-zero $G$-invariant ideal of $A$.
Define $I \star_\alpha G$ to be the set 
of all finite sums of the form $\sum_{g\in G} a_g \delta_g$, where $a_g\in I \cap D_g$, for $g\in G$.
Note that $I \star_\alpha G$ is a non-zero two-sided graded ideal of $\SkewRing$.
Hence, $I \star_\alpha G = \SkewRing$. In particular, 
$A \delta_e \subseteq I \star_\alpha G$ which shows that $I \subseteq A \subseteq I$.
We conclude that $I=A$. Thus, $A$ is $G$-simple.

Now we show the ''if'' statement. Suppose that $A$ is $G$-simple.
Let $J$ be a non-zero graded ideal of $\SkewRing$.
We claim that $J_e = J \cap A$ is a non-zero $G$-invariant ideal of $A$. 
If we assume that the claim holds, then 
$A = J_e = A \cap J \subseteq J$ from which it follows that $J = \SkewRing$.
Now we show the claim.
First we show that $J_e$ is non-zero.
Since $J$ is non-zero, there is $g \in G$
and a non-zero $a_g \in D_g$ with $a_g \delta_g \in J$.
Let $b_{g^{-1}} \in D_{g^{-1}}$ be a local 
unit for $\alpha_{g^{-1}}(a_g)$.
Then $J \ni a_g \delta_g b_{g^{-1}} \delta_{g^{-1}} =
\alpha_g( \alpha_{g^{-1}}(a_g) b_{g^{-1}} ) \delta_e =
\alpha_g( \alpha_{g^{-1}}(a_g)) \delta_e =
a_g \delta_e$ which is non-zero.
Now we show that $J_e$ is $G$-invariant.
Take $g\in G$ and $a \in J_e \cap D_{g^{-1}}$.  
Let $c_g\in D_g$ be such that $\alpha_{g^{-1}}(c_g)$ is a local unit for $a$.
Then
$\alpha_g(a)u_e=\alpha_g(\alpha_{g^{-1}}(c_g) a )u_e 
= c_g \delta_g a \delta_{g^{-1}} \in J$.
\end{proof}

The following result generalizes the recent result by 
D. Gon\c{c}alves \cite[Theorem 2.5]{Goncalves13} from the 
case when $G$ is abelian to the case when $G$ is a hypercentral group.
Moreover, our result shows that it is enough 
to consider the center of \emph{one} corner.

\begin{thm}\label{Simplicity}
Suppose that $\alpha$ is a partial action of a hypercentral group $G$
on a ring $A$ such that for each $g \in G$, the ring
$D_g$ has local units. Let $E$
denote a set of local units for $A$.
The following three assertions are equivalent:
\begin{enumerate}[{\rm (i)}]
	\item $\SkewRing$ is a simple ring;
	\item $A$ is $G$-simple and the center of the corner subring
	$f\delta_e (\SkewRing) f\delta_e$ is a field, for some non-zero $f\in E$;
	\item $A$ is $G$-simple and the center of the corner subring
	$f\delta_e (\SkewRing) f\delta_e$ is a field, for each non-zero $f\in E$.
\end{enumerate}
\end{thm}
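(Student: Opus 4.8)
The plan is to deduce Theorem~\ref{Simplicity} from the general Theorem~\ref{maintheorem} applied to the semigroup grading on $\SkewRing$. First I would set up the semigroup situation: view $\SkewRing$ as graded by the group $G$, which is of course cancellative at the identity $e$, and note that $eGe = \{e\}$ is trivially a hypercentral group (its only non-zero element is $e$). The hypothesis that each $D_g$ has local units guarantees that $\SkewRing$ is associative and that, for any fixed non-zero $f \in E$, the element $f\delta_e$ is a non-zero idempotent in $(\SkewRing)_e = D_e\delta_e = A\delta_e$. With $G$ in the role of the semigroup and $f\delta_e$ in the role of the idempotent $f$ of Theorem~\ref{maintheorem}, the corner subring $f\delta_e\, R_{eGe}\, f\delta_e$ is exactly $f\delta_e\,(A\delta_e)\,f\delta_e = f\delta_e\,(\SkewRing)\,f\delta_e$, since $R_{eGe} = R_e = A\delta_e$. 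Theorem~\ref{maintheorem} then says: $\SkewRing$ is simple if and only if it is graded simple and the center of $f\delta_e\,(\SkewRing)\,f\delta_e$ is a field. By Lemma~\ref{SimpleImpliesGSimple}, graded simplicity of $\SkewRing$ is equivalent to $G$-simplicity of $A$. This already gives the equivalence of (i) and (ii).

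For the equivalence with (iii), the one remaining point is that the condition ``the center of $f\delta_e\,(\SkewRing)\,f\delta_e$ is a field'' does not depend on the choice of non-zero $f \in E$, once we assume $A$ is $G$-simple (equivalently, $\SkewRing$ graded simple). The cleanest route is: (iii) trivially implies (ii); and (ii) together with graded simplicity implies (i) by the above, and then (i) implies (iii) because Proposition~\ref{firstprop}, applied with the idempotent $f\delta_e$ for \emph{any} non-zero $f \in E$, tells us that simplicity of $\SkewRing$ forces the center of each such corner to be a field. So I would argue the cycle (i) $\Rightarrow$ (iii) $\Rightarrow$ (ii) $\Rightarrow$ (i), invoking Proposition~\ref{firstprop} for the first implication, the trivial restriction of a ``for each'' statement to a ``for some'' statement for the second, and Theorem~\ref{maintheorem} (via Lemma~\ref{SimpleImpliesGSimple}) for the third.

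The one genuinely nontrivial verification — and the step I expect to be the main obstacle — is the bookkeeping identifying the corner subring in Theorem~\ref{maintheorem} with the corner subring named in the statement, i.e.\ checking carefully that $(\SkewRing)_{eGe} = (\SkewRing)_e = A\delta_e$ and hence that $f\delta_e\,(\SkewRing)_{eGe}\,f\delta_e = f\delta_e\,(\SkewRing)\,f\delta_e$. For the latter equality one uses that $f\delta_e$ is idempotent with $f\delta_e\,(b_g\delta_g)\,f\delta_e \in A\delta_e$ for every homogeneous $b_g\delta_g$ — concretely $f\delta_e\,(b_g\delta_g)\,f\delta_e = \bigl(f\,b_g\,\alpha_g(\alpha_{g^{-1}}(f)) \bigr)\delta_e$ when $g = e$ and is zero otherwise, after one checks the multiplication rule — so that conjugating the whole ring by $f\delta_e$ lands inside $A\delta_e$, matching the conjugation of just $A\delta_e$. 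The rest is routine: associativity and the local-units hypothesis are exactly what make $f\delta_e$ a legitimate non-zero idempotent in $(\SkewRing)_e$, and then Theorem~\ref{maintheorem}, Proposition~\ref{firstprop}, and Lemma~\ref{SimpleImpliesGSimple} assemble into the three-way equivalence with no further effort.
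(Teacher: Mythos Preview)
Your overall strategy matches the paper's one-line proof (invoke Theorem~\ref{maintheorem} and Lemma~\ref{SimpleImpliesGSimple}), and your cycle (i) $\Rightarrow$ (iii) $\Rightarrow$ (ii) $\Rightarrow$ (i) is a sound way to handle the three-way equivalence. But there is a genuine error in your bookkeeping that happens to be self-correcting only by accident.

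You write that $eGe = \{e\}$. This is false: $G$ is a group with identity $e$, so $ege = g$ for every $g \in G$, and hence $eGe = G$. Consequently $R_{eGe} = R_G = \SkewRing$, not $A\delta_e$. The hypothesis in Theorem~\ref{maintheorem} that ``the non-zero elements of $eGe$ form a hypercentral group'' is therefore exactly the hypothesis that $G$ is hypercentral --- which is the whole point, and which your argument as written never uses. Relatedly, your claim that $f\delta_e\,(b_g\delta_g)\,f\delta_e$ is zero for $g \neq e$ is incorrect: a direct computation gives $(f\delta_e)(b_g\delta_g)(f\delta_e) = \alpha_g\bigl(\alpha_{g^{-1}}(fb_g)\,f\bigr)\delta_g$, which lies in $D_g\delta_g$ and need not vanish. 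So conjugation by $f\delta_e$ does \emph{not} collapse everything into $A\delta_e$.

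The lucky part is that your final identification $f\delta_e\,R_{eGe}\,f\delta_e = f\delta_e\,(\SkewRing)\,f\delta_e$ is nevertheless correct, precisely because $R_{eGe} = \SkewRing$ rather than $A\delta_e$. So the fix is simple: replace ``$eGe = \{e\}$'' by ``$eGe = G$'', note that this is where the hypercentrality of $G$ enters, drop the erroneous paragraph about $f\delta_e\,(b_g\delta_g)\,f\delta_e$ vanishing, and observe that the corner identification is then immediate. With that correction the proof is complete and agrees with the paper's.
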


\begin{proof}
This follows immediately from Lemma \ref{SimpleImpliesGSimple} and Theorem \ref{maintheorem}.
\end{proof}

\begin{rem}\label{partialcrossedproducts}
Analogously to the way in which Lemma \ref{SimpleImpliesGSimple} is proven,
one can show, under some mild assumptions, that the crossed product by a 
twisted partial action \cite{DokuExelSimon}, denoted by $A \star_\alpha^w G$, 
is graded simple if and only if $A$ is $G$-simple.
Thereby, 
\cite[Theorem 2.25]{BCS} which was recently observed by A. Baraviera, W. Cortes and M. Soares,
can immediately be retrieved as a corollary to \cite[Theorem 5]{jespers1989}.
In fact, by E. Jespers' result (Theorem \ref{jespers}),
we note that \cite[Theorem 2.25]{BCS} holds for $A \star_\alpha^w G$
even when $G$ is a hypercentral (not necessarily abelian) group.
\end{rem}

\section*{Acknowledgements}
The authors are greatful to an anonymous reviewer for
providing insightful comments and suggestions on previous versions of this manuscript. 
The second author was partially supported by The Swedish Research Council (repatriation grant no. 2012-6113).

\end{document}